\documentclass[11pt]{amsart}
\usepackage{amsmath}
\usepackage{amsfonts}
\usepackage{amsthm}
\usepackage{amssymb}
\usepackage{verbatim}
\usepackage{enumerate}
\usepackage{graphicx}
\usepackage{longtable}
\usepackage[all,cmtip]{xy}
\usepackage{upref}
\usepackage{tikz}
\usepackage{hyperref}
\usepackage{color}
\usepackage[hang,small,bf]{caption}

\numberwithin{equation}{section}

\topmargin=-1cm
\oddsidemargin=-0.5cm
\textwidth=16.6cm
\textheight=23cm

\newtheorem{defn}{Definition}[section]
\newtheorem{teo}[defn]{Theorem}
\newtheorem{prop}[defn]{Proposition}
\newtheorem{oss}[defn]{Remark}
\newtheorem{lemma}[defn]{Lemma}

\newtheorem*{teo*}{Theorem}
\newtheorem*{prop*}{Proposition}

\setlength{\textwidth}{152mm}
\setlength{\evensidemargin}{3mm}
\setlength{\oddsidemargin}{5mm}

\renewcommand{\theta}{\vartheta}

\newcommand{\Z}{{\mathbb Z}}
\newcommand{\R}{{\mathbb R}}

\newcommand{\HH}{{\mathbb H}}

\newcommand{\dt}{\frac{d}{dt}}

\renewcommand{\phi}{\varphi}

\begin{document}

\title{Smooth Hamilton-Jacobi solutions for the horocycle flow}

\author{Luca Asselle}
\address{Ruhr-Universit\"at Bochum, Fakult\"at f\"ur Mathematik, NA 4/35, Universit\"atsstra\ss e 150, D-44780 Bochum, Germany}
\email{\href{mailto:luca.asselle@ruhr-uni-bochum.de}{luca.asselle@ruhr-uni-bochum.de}}

\subjclass[2010]{37J45, 58E05}

\keywords{Dynamical systems, Hamilton-Jacobi equation, Magnetic flows}
\date{\today}
\begin{abstract}
In this paper we compute all the smooth solutions to the Hamilton-Jacobi equation associated with the horocycle flow. This can be seen 
as the Euler-Lagrange flow (restricted to the energy level set $E^{-1}(\frac 12)$) defined by the Tonelli Lagrangian $L:T\HH\rightarrow \R$ given by (hyperbolic) 
kinetic energy plus the standard magnetic potential. The method we use is to look at Lagrangian graphs that are contained in the level set $\{H=\frac 12\}$, where 
$H:T^*\HH\rightarrow \R$ denotes the Hamiltonian dual to $L$.
\end{abstract}

\maketitle


\section{Introduction}
The horocycle flow was first introduced by Hedlund in his celebrated paper \cite{Hed32}; this flow, defined on the unit tangent bundle of the hyperbolic plane $\HH$ 
shifts inward pointing normal vectors to the horospheres (i.e. circles that are tangent to the boundary $\partial \HH$) to the left with unit speed. 
Its peculiarity relies on the fact that, once projected to a compact quotient of $\HH$ (that is, to a closed orientable surface with genus gretar than or equal to 2), 
it becomes \textit{minimal}, meaning that every orbit is dense (cf. \cite{Hed32} or \cite{BKS91}). In particular, this flow has no periodic orbits. 

The horocycle flow can be seen as (or, more precisely, it is conjugated via the Legendre transform to) the Hamiltonian flow at level $\frac 12$ defined by the Hamiltonian $H'(q,p)=\frac 12 \|p\|_q^2$ and by the twisted symplectic form $\omega_\sigma=dp\wedge dq + \pi^*\sigma$, where $\|\cdot\|_q$ is the dual norm on $T^*\HH$ induced by the hyperbolic metric of constant curvature $-1$, $\sigma$ is the standard area form on $\HH$ and $\pi:T^*M\rightarrow M$ is the cotangent bundle map. Equivalently, it can be seen as the Hamiltonian 
flow at level $\frac 12$ associated with the Hamiltonian 
$$H(q,p) = \frac 12 \, \|p-\eta\|_q^2$$
and with the standard symplectic form $dp\wedge dq$, where $\eta$ is a primitive of the standard area form $\sigma$. The projection of this flow to a 
compact quotient of $\HH$ represents, up to now, the only example of Tonelli Hamiltonian flow on a twisted cotangent bundle over a compact configuration space 
with an energy level without periodic orbits (the projection of the area form is not exact and hence in this case one has no representation of the horocycle flow as a Hamiltonian flow on the standard cotangent bundle).
Recall that a Tonelli Hamiltonian on a closed Riemannian manifold $M$ is a smooth function $H:T^*M\rightarrow \R$ which is superlinear and $C^2$-strictly convex in the fibers. 
If one allows the Hamiltonian to be more general than Tonelli, then many other counterexamples to the existence of periodic orbits are provided for instance in \cite{GG04}.

In this paper we study the existence of smooth solutions to the Hamilton-Jacobi equation 
\begin{equation}
H(q,d_qu)=\frac 12
\label{hamJac}
\end{equation}
associated with the horocycle flow. The method we will employ to compute the solutions of \eqref{hamJac} is to look at exact Lagrangian graphs (cf. Section \ref{preliminaries} for the definition) contained in $\{H=\frac 12\}$. We will perform this study in Section \ref{proofteogrosso} obtaining the following:

\begin{teo} The following statements hold: 
\\
\begin{enumerate}
\item If $u$ is a smooth solution of (\ref{hamJac}), then $u$ is either constant or (up to the addition of a constant) of the form 
$$u_a(x,y)=2\,\arctan \left (\frac{x-a}{y}\right ), \quad \text{for some}\ a\in \R.$$
\item If the graph $G_\alpha$ of a 1-form $\alpha$ is contained in the energy level $H^{-1}(\frac 12)$ and is invariant und the Hamiltonian flow, then $G_\alpha$ is exact Lagrangian (i.e. $\alpha$ is exact).
\end{enumerate}
\label{teogrosso}
\end{teo}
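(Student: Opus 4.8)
The plan is to work explicitly in the upper half-plane model, where $\HH = \{(x,y) : y>0\}$ carries the hyperbolic metric, and to write down the Hamiltonian $H(q,p) = \frac 12\|p-\eta\|_q^2$ concretely once a primitive $\eta$ of the area form $\sigma = \tfrac{1}{y^2}\,dx\wedge dy$ is fixed (for instance $\eta = \tfrac{1}{y}\,dx$). The Hamilton-Jacobi equation $H(q,d_qu)=\tfrac12$ then becomes a first-order PDE for $u$, which after clearing the dual metric reads
\begin{equation*}
\big\|d_qu - \eta\big\|_q^2 = 1, \qquad\text{i.e.}\qquad y^2\Big[\big(u_x - \tfrac{1}{y}\big)^2 + u_y^2\Big] = 1,
\end{equation*}
using that the hyperbolic cometric scales the Euclidean one by $y^2$. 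This is an eikonal-type equation, and the first goal is to solve it directly.

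For part (1), I would first observe that $u\equiv\text{const}$ is always a solution, since then $d_qu=0$ and the condition reduces to $\|\eta\|_q^2 = 1$, which holds identically for the chosen primitive. For the nonconstant case, the strategy is to exploit the geometry of the equation: the constraint says that $d_qu-\eta$ is a unit covector at every point, so its $\sigma$-dual is a unit vector field tangent to the horocycle foliation. I would substitute the proposed family $u_a(x,y)=2\arctan\!\big(\tfrac{x-a}{y}\big)$ into the PDE to verify it is a solution, computing $\partial_x u_a$ and $\partial_y u_a$ and checking the algebraic identity; the level sets of $u_a$ should come out to be exactly the horocycles based at the boundary point $a$. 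The genuinely substantive step is the \emph{completeness} claim: that these are the only nonconstant smooth global solutions. For this I would argue that a global smooth solution on all of $\HH$ forces the associated Lagrangian graph $G_{du}$ to be an exact Lagrangian graph inside $\{H=\tfrac12\}$ that is automatically invariant under the flow (Weak KAM / the fact that graphs of closed forms contained in a regular energy level are invariant), and then use the dynamics of the horocycle flow — whose orbits foliate the level set — to pin down which closed $1$-forms $du$ can occur. The main obstacle is precisely ruling out other global solutions: locally the eikonal equation has many solutions, and one must use a global/geometric obstruction (the structure of the horocycle foliation and the requirement that $u$ be single-valued and smooth on the simply connected $\HH$) to eliminate them, for example by showing that the characteristic curves of any smooth solution must be complete horocycles and that smoothness across the whole plane forces them to share a common base point $a$ (with $a=\infty$ giving the constant solution after passing to the boundary).

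For part (2), the task is to upgrade \emph{closed} to \emph{exact}. Here I would use that $\HH$ is simply connected: any closed $1$-form on a simply connected manifold is automatically exact, so $\alpha = du$ for some smooth function $u$. The only thing requiring care is verifying that an invariant Lagrangian graph $G_\alpha \subset H^{-1}(\tfrac12)$ is indeed the graph of a \emph{closed} form — i.e. that invariance plus lying in a regular energy level forces $d\alpha=0$. This follows from the standard fact that a graph contained in a regular energy level is Lagrangian for the twisted symplectic form if and only if it is invariant under the Hamiltonian flow, combined with the computation that, in the present normalization with the standard symplectic form $dp\wedge dq$, the Lagrangian condition for $G_\alpha$ is exactly $d\alpha=0$. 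Since $H^1_{\mathrm{dR}}(\HH)=0$, closedness then immediately gives exactness, completing the proof. I expect part (2) to be short once the Lagrangian-graph framework from Section~\ref{preliminaries} is invoked, with the only subtlety being the bookkeeping between the twisted and untwisted symplectic pictures.
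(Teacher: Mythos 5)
Your overall framing (solutions of \eqref{HJ} correspond to exact invariant Lagrangian graphs in $\{H=\frac 12\}$, which via the Legendre transform correspond to invariant graphs in $\{E=\frac 12\}$ tangent to the horocycle foliation) matches the paper's, and the verification that each $u_a$ solves the eikonal-type equation is routine. But there are two genuine gaps. First, for the completeness claim in (1) you say one should show ``for example'' that the characteristic curves of any smooth global solution are complete horocycles sharing a common base point $a$, but you give no argument for this; that statement is precisely the paper's Lemma \ref{lemma5.2.35}, whose proof is the real work of the whole theorem. The paper's argument is genuinely geometric: an invariant foliation containing a horosphere tangent to $\partial \HH$ at $b$ must contain every horosphere tangent at $b$ lying inside the disk it bounds (since that is the only leaf through an interior point avoiding the given one), and if the family $\{h^b_r\}$ were truncated at some radius $R$, the leaves through points just outside $\overline D_R$ with small height would be forced to intersect leaves already in the foliation, a contradiction. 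Without this step (or an equivalent global analysis of the characteristics), the classification in (1) is asserted, not proved.

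Second, your proof of (2) is circular. You invoke ``the standard fact that a graph contained in a regular energy level is Lagrangian \ldots if and only if it is invariant under the Hamiltonian flow.'' The implication Lagrangian $\Rightarrow$ invariant is Theorem \ref{teohj}; the converse, invariant $\Rightarrow$ Lagrangian, is not a standard fact --- it is exactly statement (2) of the theorem you are asked to prove, which the paper explicitly describes as ``the converse of the well-known Hamilton-Jacobi theorem for graphs.'' The paper obtains (2) not from any general principle but from the same classification used for (1): Lemma \ref{lemma5.2.35} shows that every invariant graph in $\{E=\frac 12\}$ is one of the $\Gamma_a$, and an explicit computation of $\mathcal{L}(\Gamma_a)$ exhibits it as the graph of the closed form $\omega_a = du_a$, hence exact. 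The only part of your argument for (2) that stands on its own is the passage from closed to exact via $H^1_{\mathrm{dR}}(\HH)=0$, which is indeed how the paper concludes once closedness is established.
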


Theorem \ref{teogrosso}, \textit{(2)} represents the converse of the well-known Hamiltoni-Jacobi theorem for graphs (cf. Theorem \ref{teohj} for the general statement),
which states that any Lagrangian graph contained in a level set $\{H=k\}$ is invariant.

The same method will be then used in Section \ref{geodesics} to compute some particular solution of the Hamilton-Jacobi equation associated with the geodesic flow of the 
hyperbolic plane. 

\vspace{5mm}

\noindent \textbf{Acknowledgments}: This paper has arisen as part of my master thesis project, that was performed during my stay at the Rheinische Friedrich-Wilhelms Universit\"at Bonn. I would like to thank Prof. Abbondandolo for drawing my attention to the subject and the Universit\"at Bonn, which provided a very stimulating working environment.


\section{Preliminaries}
\label{preliminaries}
On the hyperbolic plane $\HH=\{(x,y)\in \R^2|y>0\}$ we consider the standard Riemannian metric of constant curvature $-1$ given by
$$g(x,y)=\frac{1}{y^2}(dx^2 +dy^2).$$

We denote with $(q,v)$ the elements in $T\HH$ and with $v_x,v_y$ the components of a tangent 
vector $v\in T_q\HH$ in the $x$-direction, $y$-direction respectively. Consider the Tonelli Lagrangian $L:T\HH\rightarrow \R$ given by
\begin{equation}
L(q,v)\ = \frac{1}{2}\|v\|_q^2 +\eta_q(v),
\label{Lag}
\end{equation}
where $\|v\|_q :=\sqrt{g_q(v,v)}$ is the norm induced by $g$ and $\eta$ is the 1-form on $\HH$
$$\eta_{q}=\eta_{(x,y)}:=\ \frac{dx}{y}, \quad \forall q=(x,y)\in T\HH.$$
Observe that the differential of $\eta$ is the area form on $\HH$ induced by the Riemannian metric $g$. The Euler-Lagrange flow associated with $L$ models the motion of a unit mass and charge particle on $\HH$ under the effect of the magnetic potential $\eta$. In the literature, such flows are called \textit{magnetic flows} (see for instance \cite{Arn61}, \cite{Mer10}, \cite{Abb13}, \cite{AB15a},\cite{AB15b}). Being $L$ time-independent, the energy function associated with it
\begin{equation}
E(z,v)\ =\ \frac{1}{2} \, \|v\|_q^2
\label{energia}
\end{equation}
is an integral of the motion, i.e. invariant along the solutions of the Euler-Lagrange equation 
$\nabla_t\dot{\gamma}=Y_\gamma(\dot{\gamma}).$ Here $\nabla_t$ denotes the covariant derivative with respect to $g$ and $Y:T\HH\rightarrow T\HH$ is the bundle map (the \textit{Lorentz force}) defined by 
\begin{equation}
d\eta_q(u,v)=g_q\big (Y_q(u),v\big ), \quad \forall u,v\in T_q\HH, \ \forall q\in\HH.
\label{bundlemap}
\end{equation}

If we denote by $(u_x,u_y),\, (v_x,v_y),\, (w_x,w_y)$ the components of $u, v, Y_q(u)$ respectively, then a simple computation shows that (\ref{bundlemap}) is equivalent to
$$u_xv_y - u_yv_x=w_xv_x+w_yv_y,\quad \forall u_x,u_y,v_x,v_y.$$ 
In particular, the equality above for $v_x=1$ and $v_y=0$ (respectively for $v_x=0$ and $v_y=1$) yields $w_x=-u_y$ (respectively $w_y=u_x$), so that $Y_q(u)=J u$, where $J$ denotes the $2\times 2$-simplectic identity. Hence
$$\nabla_t \, \dot{\gamma}\ =\ J \, \dot{\gamma}$$
is the Euler-Lagrange equation associated with the Lagrangian $L$ in (\ref{Lag}). Since $\HH$ is an open subset of $\R^2$ the Euler-Lagrange equation can also be written using standard coordinates. Namely, the Lagrangian $L$ in coordinates is given by
\begin{equation}
L\big ((x,y),(v_x,v_y)\big )= \frac{1}{2y^2} \big (v_x^2+v_y^2\big ) + \frac{v_x}{y}
\label{Lagrangian}
\end{equation}
and this yields
\begin{equation}
\left \{ \begin{array}{l} \displaystyle \dt \left (\frac{v_x}{y^2} + \frac{1}{y} \right ) = 0\, ;\\
                                       \displaystyle \dt \left (\frac{v_y}{y^2}\right ) = - \frac{1}{y^3} \big (v_x^2+v_y^2 \big ) - \frac{v_x}{y^2}\, .
\end{array}\right .
\label{eq5.1.2}
\end{equation}

\begin{oss}
The system above shows that the momentum $p_x=\partial L/\partial v_x$ associated with $x$ is a prime integral, that is constant along the motion. The existence of two prime integrals (namely the energy $E$ and the momentum $p_x)$ can be used to show that, for every $k<\frac 12$, every Euler-Lagrange orbit with energy $k$ is periodic with period depending only on $k$.
\end{oss}

\noindent We recall that the \textit{Ma$\tilde{\text{n}}$\'e critical value} $c(L)$ is defined by 
$$c(L):= -\inf \left. \left \{\frac{1}{\tau}\int_0^\tau L(\gamma(t),\dot \gamma(t)\, dt \ \right |\ \gamma:\R/\tau \Z\stackrel{C^\infty}{\longrightarrow} \HH\right \},$$
or equivalently (cf. \cite{CI99} or  \cite{BP02}) by 
\begin{equation}
c(L):= \inf_{u\in C^\infty(\HH)} \sup_{q\in \HH} \, \frac{1}{2}\, \|du-\eta\|_q^2.
\label{mane}
\end{equation}

It is well-known that, for the Lagrangian $L$ in \eqref{Lag}, we have $c(L)=\frac 12$ (see for instance \cite[Section 6]{Ass15}). Moreover, the restriction of the Euler-Lagrange flow to the energy level set $E^{-1}(\frac 12)$ is 
the celebrated \textit{horocycle flow} (cf. \cite{Hed32} or \cite{BKS91} for further details): projected orbits are horospheres (i.e. spheres tangent to the boundary $\partial \HH)$ and the flow shifts inward pointing normal vectors of the horospheres to the left with constant speed 1.

\begin{oss}
The Euler-Lagrange flow defined by $L$ in \eqref{Lag} is an example of Euler-Lagrange flow with empty Aubry set. In particular, in sharp contrast with 
what happens for compact manifolds, there are no minimizing measures. We refer to \cite{CI99} for the precise definitions and the details. As we already observed, the dynamics for 
subcritical energies $k<\frac 12$ is periodic. For energies $k>\frac 12$, the dynamics is conjugated (up to time reparametrization) to the geodesic flow induced by a suitable Finsler metric on $\HH$ (see Appendix \ref{supercritical}). In particular, when projected to a compact quotient of $\HH$ (that is, a closed orientable surface with genus greater than or equal to 2), this flow provides an example where: 
\begin{itemize}
\item For energies $k<\frac 12$ the flow is periodic and every (periodic) orbit is contractible.
\item For energies $k>\frac 12$ the flow does not have contractible periodic orbits.
\item For $k=\frac 12$ there are no periodic orbits.
\end{itemize}
\end{oss}
\noindent The Hamiltonian $H:T^*\HH\rightarrow \R$ dual to $L$ is given by
\begin{equation}
H(q,p)= \frac{1}{2} \|p-\eta \|_q^2.
\label{hamiltoniana}
\end{equation}
Here,  for sake of simplicity, we denote the dual norm on $T^* \HH$ induced by the Riemannian metric $g$ also with $\|\cdot\|_q$. Using the standard coordinates 
$\big ((x,y),(p_x,p_y)\big )\in T^*\HH$ we obtain
\begin{eqnarray*}
H\big ((x,y),(p_x,p_y)\big ) = \frac{1}{2} \|p\|_q^2 - y\, p_x + \frac{1}{2}
\end{eqnarray*}
and hence the Hamilton-Jacobi equation $H(q,d_qu)=k$ can be written as
\begin{equation}
\frac{y^2}{2}\, |\nabla u|^2 - y \, \frac{\partial u}{\partial x}= k - \frac{1}{2}.
\label{hjgeneral}
\end{equation}

Studying the existence of (smooth) solutions of the Hamilton-Jacobi equation is particularly interesting when $k=c(L)$. In our case, for $k=\frac12$, \eqref{hjgeneral} becomes
\begin{equation}
y^2 \, |\nabla u|^2 -2y \, \frac{\partial u}{\partial x}= 0.
\label{HJ}
\end{equation}

The approach we will use to find the smooth solutions $u:\HH\rightarrow \R$ of \eqref{HJ} makes essential use of the theory of \textit{invariant Lagrangian graphs}, which we now 
recall. Let $M$ be a $n$-dimensional manifold, $T^*M$ its cotangent bundle equipped with the standard symplectic form $\omega=dp\wedge dq$.
A submanifold $W\subseteq T^*M$ is called \textit{Lagrangian} if for every $\xi\in W$ the tangent space $T_\xi W$ is a Lagrangian subspace (i.e. maximally isotropic) 
of $T_\xi T^*M$. Lagrangian submanifolds are of particular interest because of the following (cf. \cite{CI99})

\begin{teo}[Hamilton-Jacobi]
Let $W\subseteq T^*M$ be a Lagrangian submanifold. If $H|_W$ is constant, then $W$ is invariant under the Hamiltonian flow.
\label{teohj}
\end{teo}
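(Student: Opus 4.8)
The plan is to reduce the invariance statement to a pointwise tangency condition. It suffices to show that the Hamiltonian vector field $X_H$ is everywhere tangent to $W$, since an integral curve of a vector field that is tangent to a submanifold at each of its points cannot leave that submanifold (by uniqueness of solutions of the defining ODE, an integral curve starting on $W$ and solving the equation for the restricted vector field stays on $W$). Thus the whole argument reduces to verifying that $X_H(\xi)\in T_\xi W$ for every $\xi\in W$.

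The key ingredient is the defining property of a Lagrangian submanifold. First I would recall that $X_H$ is characterized by $\omega(X_H,\cdot)=dH$ (the overall sign of the convention is irrelevant for what follows), and that, since $T_\xi W$ is a Lagrangian, i.e. maximally isotropic, subspace of the $2n$-dimensional symplectic vector space $(T_\xi T^*M,\omega)$, it coincides with its own symplectic orthogonal complement: a vector $v\in T_\xi T^*M$ lies in $T_\xi W$ if and only if $\omega(v,w)=0$ for all $w\in T_\xi W$. Here one uses that isotropy gives $T_\xi W\subseteq (T_\xi W)^\omega$ while the dimension count $\dim W=n$ forces equality. This self-orthogonality is the single fact that makes the theorem work, and it is exactly where the Lagrangian hypothesis enters.

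Next I would invoke the assumption that $H|_W$ is constant. This means precisely that the restriction of $dH$ to vectors tangent to $W$ vanishes, so that $dH(w)=0$ for every $w\in T_\xi W$. Combining this with the definition of $X_H$ gives, for every $w\in T_\xi W$,
\[
\omega\big(X_H(\xi),w\big)=dH(w)=0.
\]
By the self-orthogonality of $T_\xi W$ noted above, this is exactly the condition for $X_H(\xi)$ to belong to $(T_\xi W)^\omega=T_\xi W$. Hence $X_H$ is tangent to $W$ at every point, and the tangency argument of the first paragraph yields the invariance of $W$ under the Hamiltonian flow.

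There is no serious obstacle here: the proof is essentially a one-line computation once the ingredients are assembled. The only points requiring care are the two logical hinges, namely the passage from pointwise tangency to global invariance, which must be justified by the standard ODE fact quoted above, and the explicit use of the equality $T_\xi W=(T_\xi W)^\omega$, which should be stated so that the appeal to self-orthogonality is rigorous rather than merely suggestive.
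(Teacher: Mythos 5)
Your argument is correct and is the standard proof: the identity $T_\xi W=(T_\xi W)^\omega$ for Lagrangian subspaces, combined with $dH|_{T_\xi W}=0$, gives $\omega(X_H(\xi),w)=0$ for all $w\in T_\xi W$ and hence the tangency $X_H(\xi)\in T_\xi W$, after which ODE uniqueness yields invariance. The paper does not prove this theorem itself but only cites it (to Contreras--Iturriaga), and the cited proof is exactly the one you give, so there is nothing to compare beyond noting that your two ``logical hinges'' are indeed the right points to make explicit.
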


Some distinguished and important $n$-dimensional submanifolds of $T^*M$ are graph submanifolds, which are of the form 
$$G_\alpha =\Big \{(q,\alpha_q)\ \Big |\ q\in M \Big \} \subseteq T^*M$$
where $\alpha$ is a 1-form on $M$; a \textit{Lagrangian graph} is a Lagrangian graph submanifold. By working in local coordinates one can prove the following (cf. \cite{CI99})
\begin{lemma}
$G_\alpha$ is Lagrangian if and only if $\alpha$ is closed.
\end{lemma}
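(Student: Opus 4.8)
The plan is to reduce the Lagrangian condition on the submanifold $G_\alpha$ to the vanishing of the pullback of $\omega$ under the section $\alpha\colon M\to T^*M$, $q\mapsto(q,\alpha_q)$, and then to identify that pullback with $d\alpha$. Since $G_\alpha$ has dimension $n=\dim M$, which is exactly half of $\dim T^*M=2n$, being Lagrangian is equivalent to the weaker condition of being isotropic, i.e. $\omega|_{G_\alpha}=0$; indeed a half-dimensional isotropic subspace is automatically maximally isotropic. So the whole statement collapses to showing $\alpha^*\omega=d\alpha$, which I would verify directly in local coordinates, as the excerpt suggests.

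First I would fix local coordinates $(q_1,\dots,q_n)$ on $M$ together with the induced coordinates $(q,p)$ on $T^*M$, and write $\alpha=\sum_i a_i(q)\,dq_i$. The graph $G_\alpha$ is then the image of the embedding $q\mapsto (q,a(q))$, so its tangent space at each point is spanned by the $n$ vectors
$$ X_j=\frac{\partial}{\partial q_j}+\sum_i \frac{\partial a_i}{\partial q_j}\,\frac{\partial}{\partial p_i}, \qquad j=1,\dots,n.$$
A direct computation with $\omega=\sum_i dp_i\wedge dq_i$, using $dq_i(X_j)=\delta_{ij}$ and $dp_i(X_j)=\partial a_i/\partial q_j$, then gives
$$ \omega(X_j,X_k)=\frac{\partial a_k}{\partial q_j}-\frac{\partial a_j}{\partial q_k}.$$

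The conclusion is now immediate: the right-hand side is precisely the coefficient of $dq_j\wedge dq_k$ in $d\alpha$, so $\omega(X_j,X_k)=0$ for all $j,k$ if and only if $d\alpha=0$, i.e. $\alpha$ is closed. Since the $X_j$ span $T_\xi G_\alpha$, the vanishing of $\omega$ on all such pairs says exactly that $T_\xi G_\alpha$ is isotropic, and by the half-dimension remark this is the same as Lagrangian. As a coordinate-free alternative I would invoke the tautological $1$-form $\lambda$ on $T^*M$, for which $\omega=d\lambda$ and which enjoys the defining property $\alpha^*\lambda=\alpha$; then $\alpha^*\omega=d(\alpha^*\lambda)=d\alpha$ yields the identity at once. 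I do not expect a genuine obstacle: the only points demanding a little care are the bookkeeping of signs in $\omega=dp\wedge dq$ and in $d\alpha$, and the elementary linear-algebra fact that a half-dimensional isotropic subspace is maximal.
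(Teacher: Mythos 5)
Your proof is correct and follows exactly the route the paper indicates (a local-coordinate computation showing $\alpha^*\omega=d\alpha$, plus the standard fact that half-dimensional isotropic submanifolds are Lagrangian); the paper itself only cites the result and sketches this same approach. The coordinate computation $\omega(X_j,X_k)=\partial a_k/\partial q_j-\partial a_j/\partial q_k$ and the coordinate-free alternative via the tautological form are both standard and accurate.
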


Combining Theorem \ref{teohj} with the lemma above we get that the smooth solutions of the Hamilton-Jacobi equation \eqref{HJ} correspond to the exact (i.e. defined by an 
exact form) Lagrangian graphs contained in $\{H=\frac12\}$. 

We end this section noticing that smooth Hamilton-Jacobi solutions might not exist for a general Hamiltonian. This fact brought to the development of the 
so-called \textit{Weak-KAM theory} (see for instance \cite{Fat09} and \cite{Sor10} and references therein), a particularly fruitful approach to the study of the dynamical properties 
of the Hamiltonian system which is concerned with the study of the existence of less-regular (e.g. Lipschitz-continuous) solutions (and 
subsolutions) of the 
Hamilton-Jacobi equation.


\section{Proof of Theorem \ref{teogrosso}}
\label{proofteogrosso}
Observe preliminarly that the constant functions $u\equiv c$ are solutions of \eqref{HJ}; this implies that $\HH\times \{0\}$ is an exact invariant Lagrangian graph in $T^*\HH$. 
Another way to see this fact is the following: by (\ref{hamiltoniana}), we have 
$$H(q,0)= \frac 12 \, \|\eta\|_q^2 = \frac{1}{2};$$
therefore, the exact Lagrangian graph $G_0=\{(q,0)\, |\, q\in \HH\}$ is contained in $\{H=\frac 12\}$ and hence it is invariant by the Hamilton-Jacobi theorem \ref{teohj}.

After this simple observation we proceed with the study of the invariant Lagrangian graphs.
Notice that if an invariant graph is Lagrangian, then it is also exact; in fact, since $\HH$ is simply connected any closed 1-form in $\HH$ is exact. Moreover any invariant graph in $T^*\HH$ corresponds via the Legendre transform
\begin{equation}
\mathcal L:T\HH \longrightarrow T^*\HH\, , \ \ \ (q,v) \longmapsto \left (q, \frac{\partial L}{\partial v}(q,v)\right )= \left (q\ ,\left (\frac{v_x}{y^2} + \frac{1}{y}\, ,\, \frac{v_y}{y^2} \right ) \right )
\label{Legtransform}
\end{equation}
to an invariant (under the Euler-Lagrange flow) graph in $T\HH$; therefore it suffices to determine all the invariant graphs in $T\HH$ and
see which of them correspond to graphs in $T^*\HH$ that are also Lagrangian (i.e. defined by a closed 1-form). 

The first step in this direction is to show which is the shape of an invariant (under the Euler-Lagrange flow at level $\frac 12$) foliation of $\HH$; 
recall that the Euler-Lagrange flow at the energy level $\frac{1}{2}$ is the horocycle flow, hence projected solutions are horospheres. 
Let $\zeta : \HH \rightarrow T\HH$ be a vector field such that the graph 
$$\Gamma_\zeta = \Big \{ (q,\zeta(q))\ \Big |\ q\in \HH\Big \}$$ 
is invariant and contained in the energy level $\{E= \frac{1}{2}\}$. Clearly, $\zeta \neq 0$ everywhere and hence $\int \zeta$ 
defines an oriented foliation of $\HH$, which is composed by horospheres.

\begin{lemma}
Let $\mathcal F$ be a smooth foliation of $\HH$ invariant under the horocycle flow. Then, $\mathcal F$ is composed by horospheres tangent to the same point $a\in \partial \HH$; in particular, for $a=\infty$ the invariant foliation is composed by horizontal lines.
\label{lemma5.2.35}
\end{lemma}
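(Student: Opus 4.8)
The plan is to encode the foliation by the map $b\colon\HH\to\partial\HH$ that sends each point to the boundary point at which its leaf is tangent, and then to prove that $b$ is locally constant by exploiting the fact that distinct leaves of a foliation are disjoint. By the discussion preceding the statement, every leaf of $\mathcal F$ is a horocycle, i.e. a Euclidean circle tangent to $\partial\HH$ at some point of $\R$, or a horizontal line (tangency at $\infty$). Hence $b$ is well defined; since $\mathcal F$ is smooth and each leaf is a single horocycle, $b$ is continuous (indeed smooth once $\partial\HH$ is given its circle structure) and constant along leaves. The goal then reduces to showing that $b$ is globally constant.

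Next I would pass to the explicit Euclidean picture. For $a\in\R$, the horocycle tangent at $a$ through $q=(x,y)$ is the circle of centre $(a,r)$ and radius
\[
r=\frac{(x-a)^2+y^2}{2y}>0,
\]
which exhibits $r$ as a smooth, strictly positive function of $(q,a)$. Two horocycles tangent at $a\neq a'$, of radii $r,r'$, have centres at Euclidean distance $D=\sqrt{(a-a')^2+(r-r')^2}$. Since $D\ge|r-r'|$ with equality only when $a=a'$, the lower bound for two-point intersection is automatic, and an elementary computation shows that for $a\neq a'$ the two circles meet in two points precisely when $(a-a')^2<4rr'$, are tangent when $(a-a')^2=4rr'$, and are disjoint when $(a-a')^2>4rr'$.

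I would then argue local constancy. Fix $q=(x,y)$, put $a=b(q)$ and $r=\frac{(x-a)^2+y^2}{2y}>0$, and let $q'$ run along a small transversal to $\mathcal F$ at $q$, so that each such $q'\neq q$ lies on a leaf different from the one through $q$ while $q'\to q$, whence $a'=b(q')\to a$ and $r'\to r>0$. If $a'\neq a$ for such $q'$ arbitrarily close to $q$, then $(a-a')^2\to0$ while $4rr'\to4r^2>0$, so for $q'$ close enough $(a-a')^2<4rr'$ and the two leaves would intersect in two points, contradicting the disjointness of distinct leaves. Hence $b(q')=a$ for all $q'$ near $q$, i.e. $b$ is locally constant; since $\HH$ is connected, $b\equiv a$ for a fixed $a\in\partial\HH$, and all leaves are horocycles tangent at $a$ (horizontal lines when $a=\infty$).

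The only genuinely delicate points I expect are, first, making the map $b$ and its continuity precise across the tangency point $a=\infty$; this is cleanest to handle by applying a hyperbolic isometry to move any prospective base point to $0\in\R$ before running the intersection computation, so that one always works with honest Euclidean circles. Second, one must check that the radius $r$ stays bounded away from $0$ along the transversal, but this is immediate from the displayed formula since $y>0$. Everything else is elementary plane geometry together with the foliation axiom that distinct leaves are disjoint, so the argument should present no serious obstacle beyond these bookkeeping issues.
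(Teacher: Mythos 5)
Your argument is correct, but it follows a genuinely different route from the paper. The paper's proof is global and essentially topological: assuming $\mathcal F\neq\mathcal F_a$ for every $a$, it first shows that if a horosphere $h$ tangent at $b$ is a leaf then so is every smaller horosphere tangent at $b$ inside the disk bounded by $h$; it then extracts a maximal radius $R$ for the family $\{h^b_r\}$, forces the leaves through points $(b,y)$ with $y>2R$ to be horizontal lines, and exhibits a point near $h^b_R$ through which no horosphere can pass without meeting the leaves already present --- contradicting the foliation property. Your proof is local-to-global: you encode the foliation by the tangency map $b:\HH\to\partial\HH$, prove local constancy via the explicit criterion that two circles tangent to $\R$ at $a\neq a'$ with radii $r,r'$ meet in two points precisely when $(a-a')^2<4rr'$, and conclude by connectedness. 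Both arguments ultimately rest on the same elementary fact that suitably nested horocycles tangent at distinct boundary points must intersect, but yours buys a cleaner, quantitative local statement at the price of needing continuity of $b$; that continuity is the one point you should not wave at, and in the setting of the paper it is best justified by noting that $\mathcal F$ is the integral foliation of the nonvanishing field $\zeta$ with $\Gamma_\zeta\subseteq\{E=\tfrac12\}$ invariant, so $b(q)$ is the tangency point of the horocycle-flow orbit through $(q,\zeta(q)/\|\zeta(q)\|_q)$, a smooth function on the unit tangent bundle (your suggestion of normalizing $a$ away from $\infty$ by an isometry then disposes of the remaining bookkeeping). The paper's argument, by contrast, uses no smoothness at all and would survive for merely continuous foliations by horocycles, at the cost of a more ad hoc case analysis.
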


\begin{proof}
Fix a point $a\in \partial \HH$; it is clear that the family of all horospheres tangent to $\partial \HH$ in $a$ forms an invariant foliation of $\HH$, since for any point $q\in \HH$ there is only one horosphere through $q$ and tangent to $\partial \HH$ in $a$. We denote by $\mathcal F_a$ such a foliation. 

Conversely, suppose that $\mathcal F$ is another invariant foliation of $\HH$ which is not of the form $\mathcal F_a$ for any $a\in \partial \HH$. 
Observe that if $\mathcal F$ contains a horosphere $h$ tangent to $\partial \HH$ in a point $b$ then $\mathcal F$ contains all horospheres 
tangent to $\partial \HH$ in $b$ that are contained in the disk $D_h$ bounded by $h$; in fact, if $q\in D_h$, the only horosphere through $q$ that does not 
intersect $h$ is the horosphere tangent to $\partial \HH$ in $b$. For every $b\in \partial \HH$ denote with $h_r^b$ the horosphere tangent 
to $\partial \HH$ in $b$ with (Euclidean) radius $r$. If $\mathcal F\neq \mathcal F_a$ for every $a\in \partial \HH$, then there exist 
$b\in \partial \HH$ and $R>0$ such that $\{h_r^b\}_{r< R}\subset \mathcal F$ and 
$h_r^b\not \in \mathcal F$, for every $r>R$.  Consider a point $q=(b,y)\in \HH$ with $y>2R$; then, the horosphere in $\mathcal F$ through $q$ must be the horizontal line $l_{y}$, for the only other horosphere through $q$ that does not intersect the family $\{h_r^b\}_{r< R}$ is $h_{y/2}^b$, which by assumption does not belong to $\mathcal F$. In particular, 
$$\big \{h_r^b\big \}_{r< R}\cup\big \{l_y\big \}_{y>2R}\subseteq \mathcal F.$$ 

If now $q'=(x,y')\in \HH \setminus \overline{D}_R$ is a point with $y'< 2R$, where $D_R$ is the disk bounded by $h_R^b$, then every horosphere through $q'$ 
has to intersect $\{h_r^b \}_{r< R}\cup \{l_y\}_{y>2R}$. In particular, $\mathcal F$  cannot be a foliation and this completes the proof.
\end{proof}

\vspace{3mm}

Observe that $\mathcal F_\infty$ is composed by horizontal lines, whilst $\mathcal F_0$ is composed by horospheres tangent to $\partial \HH$ in the origin; since a foliation determines the vector field $\zeta$ uniquely up to a multiplicative factor, we get that any invariant graph is of the form 
$$\alpha \cdot \Gamma_a =\Big \{ (q,\alpha\, v_q^a)\ \Big |\ q\in \HH\Big \}$$
where $\alpha$ is a positive number (being the foliation oriented) and $v_q^a$ is the unit tangent vector at $q$ to the horosphere through $q$ and tangent to $\partial \HH$ in $a$. 
Clearly, in our case $\alpha=1$, for we are considering invariant graphs contained in $\{E=\frac 12\}$; we write simply $\Gamma_a$ instead of $1\cdot \Gamma_a$. In the particular case $a=\infty$ we get 
$$\Gamma_\infty =\Big \{\big ((x,y),(-y,0)\big )\ \Big |\ (x,y)\in \HH\Big \} $$
with corresponding invariant graph $\mathcal L(\Gamma_\infty)=\HH \times\{0\}\subseteq T^*\HH$ that is clearly exact Lagrangian. This shows (in another way) that the constant functions are solutions of \eqref{HJ}.

Now consider $a\in \partial \HH \setminus \{\infty\}$; the invariant graph $\Gamma_a$ is 
$$\Gamma_a = \Big \{\big (\gamma_b^a(t), \dot{\gamma}_b^a (t)\big ) \ \Big |\ t\in \R, b\in \R\Big \}$$
where the $\gamma_b^a$'s are given by
$$\gamma_b^a(t)=\left (a + \frac{t}{b(1+t^2)}, \frac{1}{b(1+t^2)} \right )$$ 
and parametrize the horospheres tangent to $\partial \HH$ at $a$ by arc-length (see \cite{BKS91}). In particular the unit tangent vector $\dot{\gamma}_b^a(t)$
at $\gamma_b^a(t)$ is given by
\begin{equation}
\dot{\gamma}_b^a(t)\ =\ \frac{1}{b^2(1+t^2)^2}\, \big (b(1+t^2)-2bt^2, -2bt\big )\ =\ \frac{1}{b(1+t^2)^2}\, \big (1-t^2,-2t\big )\, .
\label{eq5.3.33}
\end{equation}

We want now to write $\Gamma_a$ with respect to the standard coordinates in $\HH$. Imposing the first coordinate of $\gamma_b^a(t)$ equal 
to $x$ and the second equal to $y$ yields
$$\ \ \ \ \ t=\frac{x-a}{y},\quad b = \frac{y}{(x-a)^2+y^2}\, .$$
Therefore
$$\dot{\gamma}_b^a(t)= b \, \big (y^2-(x-a)^2, -2(x-a)y\big )= \frac{y}{(x-a)^2+y^2}\, \big (y^2-(x-a)^2,-2(x-a)y\big )$$
and hence
\begin{equation}
\Gamma_a= \left. \left \{ \left ((x,y)\, ,\left (\frac{y}{(x-a)^2+y^2}\, \big (y^2-(x-a)^2,-2(x-a)y\big )\right )\right ) \ \right |\ (x,y)\in \HH\right \}\,.
\label{eq5.2.51}
\end{equation}
Using \eqref{Legtransform} we obtain that
$$\Sigma_a:= \mathcal L(\Gamma_a) =\left. \left \{ \left ((x,y)\, ,\left ( \frac{2}{(x-a)^2+y^2} \big (y,-(x-a) \big )\right ) \right )\ \right |\ (x,y)\in \HH\right \}\, .$$
is the invariant graph in $T^*\HH$ corresponding to $\Gamma_a$ via the Legendre transform. In particular,  $\Sigma_a =G_{\omega_a}$, where $\omega_a$ is the 1-form 
$$\omega_a(x,y)\ =\ \frac{2y}{(x-a)^2+y^2}\, dx - \frac{2(x-a)}{(x-a)^2+y^2}\, dy\, .$$
It is easy to check that the 1-form $\omega_a$ is closed (and hence exact), indeeed
$$\frac{\partial (\omega_a)_x}{\partial y} = \frac{2(x-a)^2-2y^2}{[(x-a)^2+y^2]^2}= \frac{\partial (\omega_a)_y}{\partial x}\, .$$
Therefore, there exists a smooth function $u_a:\HH\rightarrow \R$ such that $du_a=\omega_a$. The condition $\frac{\partial u_a}{\partial x} =(\omega_a)_x$ yields
$$u_a(x,y) = \int \frac{2y}{(x-a)^2+y^2}\, dx =\int \frac{2}{1+\zeta^2}\, d\zeta =2\, \arctan \zeta + g(y),$$ 
where we used the change of variable $\zeta = \frac{x-a}{y}$. Applying the condition $\frac{\partial u_a}{\partial y} = (\omega_a)_y$ we get   
$$\frac{\partial u_a}{\partial y}= \frac{-2(x-a)}{(x-a)^2+y^2} + g'(y) = \frac{-2(x-a)}{(x-a)^2+y^2} = (\omega_a)_y$$
and hence $g$ is a constant function. It follows that 
\begin{equation}
u_a(x,y)\ =\ 2\, \arctan \left (\frac{x-a}{y}\right )
\label{eq5.3.34}
\end{equation}
is a primitive of $\omega_a$. Hence, $\Sigma_a = G_{du_a}$ is an exact invariant Lagrangian graph and the function $u_a$ is a solution of the Hamilton-Jacobi equation. 
For sake of simplicity we also set $u_\infty =0$. Observe that there are no other smooth solutions of the Hamilton-Jacobi equation, besides those obtained by adding a constant to $u_a$; in fact, every solution corresponds to an invariant Lagrangian graph in $\{H=\frac 12\}$, which corresponds to a unique invariant graph in $\{E=\frac 12\}\subseteq T\HH$. This proves statement (1) of the theorem. As for the second statement 
of Theorem \ref{teogrosso}, with the argument above we have proved that any invariant graph contained in $\{H=\frac 12\}$ is exact Lagrangian, which is exactly what we wanted to prove.

We end this section showing a slightly different method to find the $\Sigma_a$'s, which is based on a "polar coordinates" parametrization of the $\Gamma_a$'s. The construction is the same for every $a\in \R$ (it suffices to "translate" the first coordinate in (\ref{eq5.2.50}) by the factor $a$), so that we can consider the invariant graph $\Gamma_0$ composed by the horospheres tangent to $\partial \HH$ at the origin and introduce the coordinate system $(r,\theta)$ for the hyperbolic plane 
\begin{equation}
\left \{\begin{array}{l} x=-r\, \sin \theta\, ;\\ \\ y = r \, (1-\cos \theta)\, ;\end{array}\right.
\label{eq5.2.50}
\end{equation}
where $r>0$ and $\theta \in (0,\pi)$. If we fix $r$, then the curve $\gamma_r(\theta)=r(-\sin \theta, 1-\cos \theta)$ parametrizes the horosphere of Euclidean radius $r$ tangent to $\partial \HH$ at the origin. Observe that $\gamma_r(\cdot)$ is in general not arc-length parametrized with respect neither to the Euclidean metric nor to the hyperbolic metric of $\HH$. However, the tangent vector at $\gamma_r(\theta)$ is given by 
$\gamma_r'(\theta) = r(-\cos \theta, \sin \theta)$, so that the (hyperbolic) unit tangent vector is 

$$v_r(\theta) \ =\  r(1-\cos \theta) \big (-\cos \theta, \sin \theta\big )\, .$$ 
\vspace{3mm}
By (\ref{eq5.2.50}) we get immediately that 
$$r = \frac{x^2+y^2}{2y}\, ,\ \ \ \ \sin \theta =\frac{-2xy}{x^2+y^2}\, ,\ \ \ \ \cos \theta= \frac{x^2-y^2}{x^2+y^2}$$ 
and hence 
\begin{eqnarray*}
\Gamma_0 &=& \left \{ \Big  ( r (-\sin \theta, 1-\cos \theta),\  r(1-\cos \theta) \big (-\cos \theta,\ \sin \theta) \Big ) \ \Big |\ \theta \in (0,2\pi),\, r>0\right \}\\
&=& \left. \left \{ \left ( (x,y)\ ,\frac{y}{x^2+y^2} (y^2-x^2, - 2xy) \right  ) \ \right | \ (x, y)\in \HH \right \}
\end{eqnarray*}
which is exactly (\ref{eq5.2.51}); now one completes the argument as above.


\section{On smooth Hamilton-Jacobi solutions for the geodesic flow}
\label{geodesics}
Here we want to compute, using the same method as in the last section, some particular smooth solutions of the Hamilton-Jacobi equation associated to the geodesic flow on $(\HH,g)$; recall that the geodesics of the hyperbolic plane are the semicircles orthogonal to the real line $\{y=0\}$ and the vertical lines. The geodesic flow can be seen  as the Euler-Lagrange flow associated with the kinetic energy 
$$L(q,v)\ =\ \frac{1}{2}\|v\|_q^2$$
where, as usual, $\|\cdot\|_q$ denotes the norm induced by the hyperbolic metric. Since the Euler-Lagrange flow restricted to $\{E=k\}$ is the geodesic flow up to a uniform change of speed, we can fix once for all the energy value $k=\frac{1}{2}$ (in which  geodesics are parametrized by arclength). The Hamilton-Jacobi equation $H(q,d_qu)=\frac{1}{2}$ has the form
$$H(q,d_qu)= \frac{1}{2}\, \|du\|_q^2= \frac{y^2}{2}\, \|du\|_E^2= \frac{1}{2}$$
where here $\|\cdot\|_q$ denotes the dual norm on $T^*\HH$ induced by the hyperbolic metric $g$ and $\|\cdot \|_E$ denotes the Euclidean norm. Thus, a smooth function $u:\HH\rightarrow \R$ is solution of the Hamilton-Jacobi equation if and only if 
\begin{equation}
|\nabla u|^2\ =\ \frac{1}{y^2}\, .
\label{HJeq2}
\end{equation}

We shall observe already at this point that the method we exploit might not find all the solutions of (\ref{HJeq2}), the reason being that we consider 
just particular invariant foliations of $\HH$. In the case of the horocycle flow (cf. Lemma \ref{lemma5.2.35}), the foliations made by horospheres tangent at a same point of $\partial \HH$ were the only possible; here this is no longer the case, as we will see by considering the invariant foliations composed by geodesics ending in the same point $a\in \partial \HH$ and the invariant foliations composed by geodesics with the same center $a\in \partial \HH$ (it is also possible to get other invariant foliations letting the center free to move).

Recall that in this case the Legendre transform is given by
\begin{equation}
(q,(v_x,v_y))\longmapsto \left (q, \frac{1}{y^2}(v_x,v_y)\right )
\label{Leggeo}
\end{equation}

Let us start with the invariant foliation of $\HH$ made by the vertical lines parametrized by arc-length, that is $\gamma_b(t)=(b,e^t)$ for any $b\in \R$, $t\in \R$, and consider the invariant graph 
$$\Gamma=\Big \{\big ((b,e^t),(0,e^t)\big )\ \Big |\ b\in\R\, ,\ t\in \R\Big \}=\Big \{\big ((x,y),(0,y)\big )\ \Big |\ (x,y)\in\HH\Big \}.$$
The corresponding invariant graph in $T^*\HH$ via the Legendre transform is 
$$\Sigma\ =\ \left. \left \{\left ((x,y), \left (0\, ,\frac{1}{y}\, \right )\right )\ \right|\ (x,y)\in \HH\right \}$$
and it is clear that $\Sigma$ is the graph of the form $du$, with $u=\log y$; hence $u$ is a solution of (\ref{HJeq2}). Since the geodesic flow is reversible, there is another invariant graph associated with the foliation made by vertical lines. Such a graph is obtained considering the "opposite" arclength parametrization of the vertical geodesics, 
that is $\gamma'_b(t)=(b,e^{-t})$, and yields  the solution $u=-\log y$ of the Hamilton-Jacobi equation. This fact could be also deduced from the quadratic dependence on (the derivatives of) $u$ in (\ref{HJeq2}) and implies that if we get a solution $u$ of the Hamilton-Jacobi equation then also $-u$ is solution. 

Now consider the foliation of $\HH$ made by geodesics ending at the same point $a\in \R$; the invariant graph associated to this foliation is (up to a time inversion) given by
$$\Gamma_a \ =\ \Big  \{\big (\gamma_a^b(t),\dot{\gamma}_a^b(t)\big )\ \Big |\ b,t\in \R\Big \}$$ 
where $\gamma_a^b(t)$ is the arclength parametrization of the geodesic with center in $a$ 
\begin{equation}
\gamma_a^b(t)= \left  ( a-\frac{b}{b^2+e^{2t}}, \frac{e^t}{b^2+e^{2t}} \right )\, .
\label{eqgeodetiche}
\end{equation}
In particular the tangent vector at $\gamma_a^b(t)$ is given by
$$\dot{\gamma}_a^b(t)= \frac{e^t}{(b^2+e^{2t})^2}\, \big (2e^tb, b^2-e^{2t}\big ).$$ 
In standard coordinates we have 
$$b=-e^t \, \frac{x-a}{y}, \quad e^t=\frac{y}{(x-a)^2+y^2}\, ;$$
therefore
\begin{equation*}
\Gamma_a \ =\ \left.  \left \{ \left ( (x,y )\, , \frac{y}{(x-a)^2+y^2} \left (\! -2(x-a)y, (x-a)^2-y^2\, \right  ) \right )\ \right |\ (x,y)\in \HH\right \}\, .
\end{equation*}
Using the Legendre transform \eqref{Leggeo} we obtain the corresponding invariant graph in $T^*\HH$
$$\Sigma_a\ =\ \left.\left \{\left ( (x,y), \frac{1}{(x-a)^2+y^2}\, \left (-2(x-a),\frac{(x-a)^2-y^2}{y}\right )\right ) \ \right |\ (x_1,x_2)\in \HH\right \}$$
which is the graph of the exact 1-form 
$$\omega_a(x,y)\ = \, -\,  \frac{2(x-a)}{(x-a)^2+y^2}\, dx \, +\,  \frac{(x-a)^2-y^2}{y[(x-a)^2+y^2]}\, dy\, ,$$
with primitive given by 
\begin{equation}
u_a(x,y)\ =\ \log y - \log \big ((x-a)^2+y^2\big )\,  .
\label{eq5.5.1}
\end{equation}
For sake of simplicity we also set $u_\infty(x,y):=\log y$. Observe finally that, if $u$ is a solution of \eqref{HJeq2}, then $u+c$ is also solution for every $c\in \R$.

\begin{prop}
For every $a\in \partial \HH$, for every $c\in \R$, the function $\pm \, u_a +c$ defined by (\ref{eq5.5.1}) is solution of the Hamilton-Jacobi equation (\ref{HJeq2}).
\end{prop}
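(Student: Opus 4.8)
The plan is to exploit the structure of equation~(\ref{HJeq2}), which constrains only the squared gradient norm $|\nabla u|^2$ and is therefore insensitive both to the sign of $u$ and to the addition of a constant. Concretely, $|\nabla(-u)|^2=|\nabla u|^2$ and $\nabla(u+c)=\nabla u$, so if $u_a$ solves~(\ref{HJeq2}) then so does $\pm\,u_a+c$ for every $c\in\R$. This reduces the entire statement to checking that $u_a$ itself, with the plus sign and $c=0$, is a solution for each $a\in\partial\HH$.

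For $a=\infty$ the verification is immediate: from $u_\infty=\log y$ one reads off $\nabla u_\infty=(0,1/y)$, whence $|\nabla u_\infty|^2=1/y^2$. For finite $a\in\R$ I would simply differentiate the explicit primitive~(\ref{eq5.5.1}). Writing $s:=x-a$ and $\rho^2:=s^2+y^2$, one finds
$$\frac{\partial u_a}{\partial x}=-\frac{2s}{\rho^2},\qquad \frac{\partial u_a}{\partial y}=\frac{s^2-y^2}{y\,\rho^2},$$
which are exactly the components of the form $\omega_a$ recorded above. Squaring and adding gives
$$|\nabla u_a|^2=\frac{4s^2y^2+(s^2-y^2)^2}{y^2\,\rho^4},$$
and the numerator collapses to $\rho^4$ via the elementary identity $4s^2y^2+(s^2-y^2)^2=(s^2+y^2)^2$, so that $|\nabla u_a|^2=1/y^2$, as required.

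Alternatively, and more in the spirit of the preceding construction, one may bypass the computation altogether: the graph $\Sigma_a=G_{\omega_a}$ was produced by pushing the energy-level graph $\Gamma_a\subseteq\{E=\tfrac12\}$ forward through the Legendre transform~(\ref{Leggeo}), and since for the kinetic Lagrangian one has $H\circ\mathcal L=E$, the graph $\Sigma_a$ lands in $\{H=\tfrac12\}$. But the containment $G_{du_a}\subseteq\{H=\tfrac12\}$ says precisely that $H(q,d_qu_a)=\tfrac12$ for all $q$, i.e.\ that $u_a$ solves the Hamilton-Jacobi equation; the sign symmetry then mirrors the reversibility of the geodesic flow already noted for the vertical foliation. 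Either way there is no genuine obstacle: the dynamical content (identifying the invariant foliations by geodesics and lifting them to invariant graphs) has already been carried out, so that the remaining work is only the routine algebraic identity displayed above.
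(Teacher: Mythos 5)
Your proposal is correct. The reduction to the case $+u_a$ with $c=0$ is immediate since \eqref{HJeq2} depends only on $|\nabla u|^2$, and your direct computation checks out: the partials of $u_a=\log y-\log((x-a)^2+y^2)$ reproduce the components of $\omega_a$, and the identity $4s^2y^2+(s^2-y^2)^2=(s^2+y^2)^2$ gives $|\nabla u_a|^2=1/y^2$ exactly. The paper itself never performs this verification; there the proposition is really a corollary of the preceding construction, i.e.\ your ``alternative'' route: $\Gamma_a$ lies in $\{E=\tfrac12\}$ because the geodesics are parametrized by arclength, the Legendre transform carries $\{E=\tfrac12\}$ to $\{H=\tfrac12\}$ (since $H\circ\mathcal L=E$ for the kinetic Lagrangian), and exhibiting the primitive $u_a$ of $\omega_a$ shows $G_{du_a}=\Sigma_a\subseteq\{H=\tfrac12\}$, which is the Hamilton--Jacobi equation. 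So you have both the paper's argument and an independent, fully elementary check; the direct computation buys a self-contained verification that does not rely on the correctness of the parametrizations \eqref{eqgeodetiche} or of the Legendre-transform bookkeeping, while the graph-theoretic argument explains \emph{why} such solutions exist and connects them to the invariant foliations. Either suffices.
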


Now we want to compute the Hamilton-Jacobi solutions associated with the invariant foliations of $\HH$ composed by geodesic half-circles with the same center $a\in \R$; the argument is the same for every $a\in \R$, so we may suppose $a=0$ and then shift the first coordinate by a factor $a$. Consider the polar coordinate system $(r,\theta)$ for the hyperbolic plane
$$\left \{\begin{array}{l} x = - r\, \cos \theta;\\ \\ y = r \, \sin \theta; \end{array}\right.$$ 
where $r>0$ and $\theta \in (0,\pi)$; for any $r$ fixed the curve $\gamma_r(\theta)=r (-\cos\theta,\sin \theta)$  parametrizes the geodesic with center in the origin and  Euclidean radius $r$. Although the parametrization is not by arc-length with respect neither to the Euclidean metric nor to the hyperbolic metric, we can associate to each point $\gamma_r(\theta)$ a (hyperbolic) unit tangent vector 
$$v_r(\theta)= r\, \sin \theta \, (\sin \theta, \cos \theta)\,.$$ 
Therefore we get the invariant graph 
\begin{eqnarray*}
\Gamma_0  &=& \Big \{ \big ( r \, (-\cos \theta, \sin \theta)\, ,\, r\, \sin \theta\, (\sin \theta,\cos \theta) \big )\ \Big |\ r>0,\, \theta\in (0,\pi) \Big \}\\ 
&=& \left.\left  \{\left ( (x,y)\, ,\, y\, \left (\frac{y}{\sqrt{x^2+y^2}}, - \frac{x}{\sqrt{x^2+y^2}} \right ) \right ) \ \right |\ (x,y)\in \HH \right \}\, .
\end{eqnarray*}
The corresponding graph in $T^*\HH$ via the Legendre transform is 
$$\Sigma_0 \ =\ \left. \left \{ \left ( (x,y)\, ,\, \left ( \frac{1}{\sqrt{x^2+y^2}}, \frac{-x}{y\, \sqrt{x^2+y^2}}\right ) \right ) \ \right |\ (x,y)\in \HH \right \}$$
which is the graph of the exact 1-form 
$$\omega_0(x,y)= \frac{1}{\sqrt{x^2+y^2}}\, dx \, - \, \frac{x}{y\, \sqrt{x^2+y^2}}\, dy\, ,$$
with primitive given by
$$u_0(x,y)= \text{arcsinh}\, \left ( \frac{x}{y} \right ).$$ 
Applying the translation $x\mapsto x+a$ on the first cordinate we get that, for any $a\in \R$, 
\begin{equation}
u_a(x,y)\ =\ \text{arcsinh}\, \left ( \frac{x-a}{y} \right )
\label{eq5.5.52}
\end{equation}
is a smooth solution of the Hamilton-Jacobi equation.  

\begin{prop}
For every $a\in \R$, for every $c\in \R$, the function $\pm \, u_a + c$ defined by (\ref{eq5.5.52}) is a smooth solution of the Hamilton-Jacobi equation (\ref{HJeq2}).
\end{prop}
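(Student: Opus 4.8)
The plan is to read the claim off the construction that immediately precedes it, and then dispose of the signs and the additive constant structurally. First I would recall that the family $\gamma_r(\theta)=r(-\cos\theta,\sin\theta)$, with $r>0$ and $\theta\in(0,\pi)$, foliates $\HH$ by the geodesic half-circles centered at the origin and is invariant under the geodesic flow; attaching to each point the hyperbolic unit tangent vector $v_r(\theta)$ produces an invariant graph $\Gamma_0\subseteq\{E=\tfrac12\}\subseteq T\HH$. Applying the Legendre transform \eqref{Leggeo} sends $\Gamma_0$ to an invariant graph $\Sigma_0\subseteq\{H=\tfrac12\}\subseteq T^*\HH$, and one checks that $\Sigma_0=G_{\omega_0}$ is the graph of the closed (hence, on the simply connected $\HH$, exact) $1$-form $\omega_0$ displayed above, whose primitive is $u_0(x,y)=\operatorname{arcsinh}(x/y)$. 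By the correspondence recorded in Section~\ref{preliminaries} between exact Lagrangian graphs contained in an energy level $\{H=\tfrac12\}$ and smooth solutions of the Hamilton-Jacobi equation, $u_0$ solves \eqref{HJeq2}; translating $x\mapsto x+a$ in the first coordinate — an operation that commutes with the entire construction — then yields \eqref{eq5.5.52} as a solution for every $a\in\R$.

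It remains to account for the $\pm$ and the $+c$. The equation \eqref{HJeq2} involves $u$ only through $|\nabla u|^2$, so it is manifestly invariant under both $u\mapsto u+c$ and $u\mapsto -u$; hence $\pm u_a+c$ is a solution as soon as $u_a$ is. Geometrically I would note that $u_a\mapsto u_a+c$ leaves the graph $G_{du_a}$ untouched since $d(u_a+c)=du_a$, while $u_a\mapsto -u_a$ corresponds to reversing the orientation of the foliating geodesics — legitimate because the geodesic flow is reversible — which is exactly the passage from $\omega_a$ to $-\omega_a$.

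As an independent cross-check I would substitute \eqref{eq5.5.52} directly into \eqref{HJeq2}. Writing $w=(x-a)/y$ and using $\sqrt{w^2+1}=\sqrt{(x-a)^2+y^2}/y$, one gets $\partial_x u_a=1/\sqrt{(x-a)^2+y^2}$ and $\partial_y u_a=-(x-a)/\big(y\sqrt{(x-a)^2+y^2}\big)$, and summing the squares collapses to $|\nabla u_a|^2=1/y^2$. I expect no genuine obstacle: the substantive content has already been assembled in the derivation, and the $\pm$ and $+c$ are purely formal. The one point I would take care to state explicitly is that $\operatorname{arcsinh}$ is defined and smooth on all of $\R$, so that each $u_a$ is a bona fide smooth function on the whole of $\HH$ with no singularities along the foliation; this is what upgrades $\pm u_a+c$ from a local to a global smooth solution, and is the only place where a reader might otherwise worry.
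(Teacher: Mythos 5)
Your argument is correct and follows essentially the same route as the paper, which proves the proposition implicitly through the preceding construction (invariant foliation by concentric geodesic half-circles, Legendre transform, exactness of $\omega_0$, translation by $a$) and handles the $\pm$ and $+c$ exactly as you do, via the quadratic dependence of \eqref{HJeq2} on $\nabla u$ and the reversibility of the geodesic flow. Your direct substitution cross-check and the explicit remark on the global smoothness of $\operatorname{arcsinh}$ are welcome additions but do not change the substance.
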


\bibliographystyle{amsalpha}
\bibliography{bibliography}

\end{document}